\numberwithin{equation}{section}
\newtheorem{theorem}{Theorem}[section] 
\newtheorem{lemma}[theorem]{Lemma}     
\newtheorem{corollary}[theorem]{Corollary}
\theoremstyle{definition}
\newcommand {\C}{\mathbb C}
\newcommand {\N}{\mathbb N}
\newcommand{\len}{{\rm len \,}}
\newcommand{\spn}{{\rm span \,}}
\newcommand{\1}{\mathbbm{1}}
\begin{document}

\title{An Infinite C*-algebra with a Dense, Stably Finite *-subalgebra}

\author[N.\ J.\  Laustsen]{Niels Jakob Laustsen}
\author[J.\ T.\  White]{Jared T.\ White}
\address{
Niels Jakob Laustsen/Jared T. White\\
Department of  Mathematics and Statistics\\
Lancaster University\\
Lancaster LA1 4YF\\
United Kingdom}
\email{n.laustsen@lancaster.ac.uk, j.white6@lancaster.ac.uk}

\subjclass[2010]{46L05 (primary); 20M25, 46L09 (secondary)}

\keywords{C*-algebra, stably finite, infinite, completion, free product}

\begin{abstract}
We construct a unital pre-C*-algebra $A_0$ which is stably finite, in the sense that every left invertible square matrix over $A_0$ is right invertible, while the C*-completion of $A_0$ contains a non-unitary isometry, and so it is infinite.

\medskip
\noindent
To appear in \emph{Proceedings of the American Mathematical Society}.
\end{abstract}

\maketitle

\section{Introduction}
\noindent 
Let $A$ be a unital algebra. We say that $A$ is \textit{finite} (also called directly finite or Dedekind finite) if every left invertible element of $A$ is right invertible, and we say that $A$ is \textit{infinite} otherwise. This notion originates in the seminal studies of projections in von
Neumann algebras carried out by Murray and von Neumann in the 1930s.
At the $22^{\text{nd}}$ International Conference on Banach Algebras and Applications, held at the Fields Institute in Toronto in 2015, Yemon Choi raised the following questions:
\begin{enumerate}
\item \label{q1} Let $A$ be a unital, finite normed algebra. Must its completion be finite?
\item \label{q2} Let $A$ be a unital, finite pre-C*-algebra. Must its completion be finite?
\end{enumerate}
\noindent Choi also stated Question \eqref{q1} in \cite[Section 6]{C}.

A unital algebra $A$ is said to be \textit{stably finite} if the matrix algebra $M_n(A)$ is finite for each  $n \in \N$. This stronger form of finiteness is particularly useful in the
context of $K$-theory, and so it has become a household item in the Elliott
classification programme for C*-algebras. The notions of finiteness
and stable finiteness differ even for C*-algebras, as was shown
independently by Clarke \cite{NPC} and Blackadar~\cite{Bl1} (or see \cite[Exercise 6.10.1]{Bl2}).
A much deeper result is due to R\o{}rdam
\cite[Corollary~7.2]{rordam}, who constructed a unital, simple
C*-algebra which is finite (and separable and nuclear), but not
stably finite.

We shall answer Question \eqref{q2}, and hence Question \eqref{q1}, in the negative by proving the following result:

\begin{theorem} \label{-1} 
There exists a unital, infinite C*-algebra which contains a dense, unital, stably finite *-subalgebra.
\end{theorem}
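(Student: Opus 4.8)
The plan is to realise a group $*$-algebra as a dense $*$-subalgebra of an infinite C*-algebra. Let $\mathbb{F}_\infty$ be the free group on countably many generators. Its group $*$-algebra $\mathbb{C}[\mathbb{F}_\infty]$ is stably finite: the left regular representation embeds it unitally into the group von Neumann algebra $L(\mathbb{F}_\infty)$, which is finite, carrying the faithful normal trace $x\mapsto\langle x\delta_e,\delta_e\rangle$; since $M_n(L(\mathbb{F}_\infty))$ is again a finite (hence directly finite) von Neumann algebra, and a unital subalgebra of a directly finite unital algebra is directly finite, every $M_n(\mathbb{C}[\mathbb{F}_\infty])$ is directly finite. (This is essentially Kaplansky's direct-finiteness theorem for group algebras.) Hence Theorem~\ref{-1} will follow as soon as we produce an infinite, unital, separable C*-algebra $A$ and an injective, unital $*$-homomorphism $\psi\colon\mathbb{C}[\mathbb{F}_\infty]\to A$ with dense range: then $A_0:=\psi(\mathbb{C}[\mathbb{F}_\infty])\cong\mathbb{C}[\mathbb{F}_\infty]$ is a dense, unital, stably finite $*$-subalgebra of the infinite C*-algebra $A$.

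Writing $g_1,g_2,\ldots$ for the generators of $\mathbb{F}_\infty$, such a map $\psi$ is the same as a sequence $(u_n)_{n\geq1}$ of unitaries of $A$ that is \emph{generating} ($C^*(u_n:n\geq1)=A$) and \emph{linearly free} (every finite set of distinct reduced words in $u_1^{\pm1},u_2^{\pm1},\ldots$ is linearly independent over $\mathbb{C}$); one sets $\psi(g_n)=u_n$, and injectivity is exactly linear freeness. I would take $A$ to be the Cuntz algebra $\mathcal{O}_2$ — separable, infinite, singly generated — and construct $(u_n)$ by a Baire category argument in the Polish space $\mathcal{U}(\mathcal{O}_2)^{\mathbb{N}}$. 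The generating sequences form a dense $G_\delta$ set (density being the standard fact that a generic sequence of unitaries generates $\mathcal{O}_2$). The linearly free sequences form a $G_\delta$ set as well, since that set is the intersection, over the countably many finite sets $S$ of reduced words, of the open sets $\{\bar u : \text{the words of }\bar u\text{ indexed by }S\text{ are linearly independent}\}$. Any sequence lying in both dense $G_\delta$ sets works, and then $A_0$, the $*$-subalgebra of $\mathcal{O}_2$ generated by $u_1,u_2,\ldots$, is the desired subalgebra.

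The main obstacle is the density of the linearly free sequences, which is the heart of the proof and where free products come in. What must be shown is a perturbation statement: given unitaries $u_1,\ldots,u_k\in\mathcal{O}_2$, a finite set $S$ of reduced words, and $\varepsilon>0$, there are unitaries $u_1',\ldots,u_k'$ with $\|u_i-u_i'\|<\varepsilon$ whose $S$-indexed words are linearly independent. One exploits $\mathcal{O}_2\cong\mathcal{O}_2\otimes\mathcal{O}_2$: by the Kirchberg–Phillips theorems one may fix such an isomorphism $\theta\colon\mathcal{O}_2\otimes\mathcal{O}_2\to\mathcal{O}_2$ under which $\theta(\,\cdot\otimes1)$ is within $\varepsilon$ of the identity on the finite set $\{u_1,\ldots,u_k\}$, and then put $u_i':=\theta(u_i\otimes v_i)$, where $v_1,\ldots,v_k$ are unitaries of the second tensor factor, each within $\varepsilon$ of $1$, whose $S$-indexed words are linearly independent. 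Such a family $(v_i)$ is obtained by embedding into $\mathcal{O}_2$ (via Kirchberg's embedding theorem) a reduced free product of commutative C*-algebras of functions on small arcs of the circle, in which the linear independence of the relevant words is detected by the canonical free state. Since any finite combination $\sum_g c_g\,u_g\otimes v_g$ lies in the algebraic tensor product, linear independence of the words $\{v_g\}$ forces that of the words $\{u_g'\}$. So everything comes down to producing a linearly free family of unitaries arbitrarily close to $1$, and checking that the free-product model just described has this linear-independence property is the step I expect to require the most care.
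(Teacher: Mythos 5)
Your proposal is correct in outline, but it takes a genuinely different route from the paper, and at a very different level of technology. The paper works with semigroup algebras rather than group algebras: its dense stably finite subalgebra is $\mathbb{C}S_\infty$ (the free $*$-monoid on countably many generators), embedded into a C*-completion $A$ of $\mathbb{C}(BC * S_\infty)$ — where $BC$ is the bicyclic monoid supplying the non-unitary isometry — via the explicit perturbation $\varphi(\delta_{t_n}) = \delta_p + \gamma_n\delta_{t_n}$ with $\gamma_n \to 0$; density is then immediate (one recovers $\delta_p$ and each $\delta_{t_n}$ in $\overline{\varphi(\mathbb{C}S_\infty)}$), and injectivity of $\varphi$ is proved by an elementary induction on word length. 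The only non-elementary inputs are faithful C*-representations of $\mathbb{C}(BC)$ and $\mathbb{C}S_\infty$ and Avitzour's faithful-state result for free products. Your argument instead fixes the ambient algebra to be $\mathcal{O}_2$ and runs a Baire-category argument over $\mathcal{U}(\mathcal{O}_2)^{\mathbb{N}}$, with the density of the ``linearly free'' open sets obtained from $\mathcal{O}_2 \cong \mathcal{O}_2 \otimes \mathcal{O}_2$, approximate unitary equivalence of unital endomorphisms of $\mathcal{O}_2$ (Kirchberg--Phillips), and Kirchberg's exact-embedding theorem. What your route buys is a stronger-looking conclusion: the infinite algebra can be taken to be the simple, nuclear, purely infinite $\mathcal{O}_2$, and the dense subalgebra is a group algebra $\mathbb{C}[\mathbb{F}_\infty]$; what it costs is reliance on deep classification results where the paper is essentially self-contained.

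Two points deserve attention. First, the step you flag as needing care — linear independence of distinct reduced words in unitaries $v_i$ close to $1$ — does go through, but it is the real content of your density claim and should be written out: take faithful states on $C(\sigma_i)$ with $\sigma_i$ a nondegenerate arc near $1$; in the reduced free product the algebraic free product embeds (alternating centred words send the vacuum to the corresponding elementary tensors), and since each $v_i$ has infinite spectrum the centred powers $v_i^n - \phi_i(v_i^n)1$ are linearly independent, so a leading-term argument by word length gives the independence. You also need Kirchberg's embedding to be \emph{unital} (pass to a corner $p\mathcal{O}_2 p \cong \mathcal{O}_2$), and either the exactness of reduced free products or, more simply, replace the free-product model by $v_i = f(\lambda(g_i))$ for free Haar unitaries $\lambda(g_i) \in C^*_r(\mathbb{F}_k)$ and $f$ a homeomorphism of $\mathbb{T}$ onto a small arc, which gives the same independence with less machinery. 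Second, your phrase ``density being the standard fact that a generic sequence of unitaries generates $\mathcal{O}_2$'' is circular as stated; the honest (and easy) argument is that a basic open set only constrains finitely many coordinates, so one may place a fixed countable generating family of unitaries in the remaining coordinates. With these repairs your proof is complete, but markedly heavier than the paper's.
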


Let $A$ be a unital *-algebra. Then there is a natural variant of finiteness in this setting, namely we say that $A$ is \textit{*-finite} if whenever we have $u \in A$ satisfying $u^* u = 1$, then $uu^* = 1$. However, it is known (see, \emph{e.g.}, \cite[Lemma 5.1.2]{RLL}) that a C*-algebra is finite if and only if it is *-finite, so in this article we shall not need to refer to *-finiteness again.

\section{Preliminaries}
\noindent
Our approach is based on semigroup algebras. Let $S$ be  a monoid, that is, a semigroup with an identity, which we shall usually denote by $e$. By an \textit{involution} on $S$ we mean a map from $S$ to $S$, always denoted by $s \mapsto s^*$, satisfying $(st)^* = t^* s^*$ and $s^{**} = s \; (s, t \in S)$.  By a \textit{*-monoid} we shall mean a pair $(S, *)$, where $S$ is a monoid, and $*$ is an involution on $S$. Given a *-monoid $S$, the semigroup algebra $\C S$ becomes a unital *-algebra simply by defining $\delta_s^* = \delta_{s^*} \; (s \in S)$, and extending conjugate-linearly. 

Next we shall recall some basic facts about free products of *-monoids, unital *-algebras, and their C*-representations.

Let $S$ and $T$ be monoids, and let $A$ and $B$ be unital algebras. Then we denote the free product (\emph{i.e.} the coproduct) of $S$ and $T$ in the category of monoids by $S*T$, and similarly we denote the free product of the unital algebras $A$ and $B$ by $A*B$. It follows from the universal property satisfied by free products that, for monoids $S$ and $T$, we have $\C(S*T) \cong (\C S)*(\C T)$.

Given *-monoids $S$ and $T$, we can define an involution on $S*T$ by 
$$(s_1t_1 \cdots s_nt_n)^* = t_n^*s_n^* \cdots t_1^* s_1^* $$
for $n \in \N, s_1 \in S, s_2, \ldots, s_n \in S \setminus \{ e \}, t_1, \ldots, t_{n-1} \in T \setminus \{ e \},$ and $t_n \in T.$  The resulting *-monoid, which we continue to denote by $S*T$, is the free product in the category of *-monoids. We can analogously define an involution on the free product of two unital *-algebras, and again the result is the free product in the category of unital *-algebras. We then find that $\C(S*T) \cong (\C S)*(\C T)$ as unital *-algebras.

Let $A$ be a *-algebra. If there exists an injective *-homomorphism from  $A$  into some C*-algebra, then we say that $A$ admits a \textit{faithful C*-representation}. In this case, $A$ admits a norm such that the completion of $A$ in this norm is a C*-algebra, and we say that $A$ admits a \textit{C*-completion}. Our construction will be based on C*-completions of *-algebras of the form $\C S$, for $S$ a *-monoid. 

We shall denote by $S_\infty$ the free *-monoid on countably many generators; that is, as a monoid $S_\infty$ is free on some countably-infinite generating set ${\lbrace t_n, s_n : n \in \N \rbrace}$, and the involution is determined by $t_n^* = s_n \: (n \in \N)$. For the rest of the text we shall simply write $t_n^*$ in place of $s_n$. We define $BC$ to be the bicyclic monoid $\langle  p,q : pq = e \rangle$. This becomes a *-monoid when an involution is defined by $p^* = q$, and the corresponding $*$-algebra~$\C{}BC$ is infinite because $\delta_p\delta_q = \delta_e$, but $\delta_q\delta_p = \delta_{qp} \neq\delta_e.$

\begin{lemma} \label{6a} 
The following unital *-algebras admit faithful C*-representa\-tions:
\begin{enumerate}
\item[{\rm (i)}] $\C(BC),$
\item[{\rm (ii)}] $\C(S_\infty)$.
\end{enumerate}
\end{lemma}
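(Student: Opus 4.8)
The plan is to exhibit an explicit faithful *-representation in each case; part (i) is routine, and essentially all of the work is in part (ii).

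\emph{Part (i).} Let $S$ be the unilateral shift on $\ell^2(\Z_{\ge 0})$, determined by $Se_k = e_{k+1}$ $(k \ge 0)$, so that $S^*S = \1$ while $SS^* \ne \1$. Viewing $\B(\ell^2(\Z_{\ge 0}))$ as a *-monoid under multiplication and adjunction, the assignment $q \mapsto S$, $p \mapsto S^*$ is a *-monoid homomorphism from $BC$, since it respects both the relation $pq = e$ (which becomes $S^*S = \1$) and the involution $p^* = q$. By the universal property of the semigroup *-algebra it therefore extends to a unital *-homomorphism $\pi \colon \C(BC) \to \B(\ell^2(\Z_{\ge 0}))$. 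As every element of $BC$ has a unique normal form $q^i p^j$ $(i,j \ge 0)$, the family $(\delta_{q^i p^j})$ is a basis of $\C(BC)$ and $\pi(\delta_{q^i p^j}) = S^i (S^*)^j$, so $\pi$ is injective as soon as the operators $S^i(S^*)^j$ are linearly independent. This is immediate: $S^i(S^*)^j e_k = e_{k+i-j}$ if $k \ge j$ and $0$ otherwise, so evaluating a vanishing finite linear combination on the vectors $e_k$ for all $k$ yields a triangular system of scalar equations with only the trivial solution.

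\emph{Part (ii).} Here I would use the full Fock space. Fix a separable Hilbert space $K$ with orthonormal basis $\{f_n : n \in \N\} \cup \{g_n : n \in \N\}$, let $\mathcal F(K) = \bigoplus_{k \ge 0} K^{\otimes k}$ (with $K^{\otimes 0} = \C\Omega$) be the full Fock space, and for $\xi \in K$ let $\ell(\xi)$ be the left creation operator; thus $\|\ell(\xi)\| = \|\xi\|$ and $\ell(\xi)^*\ell(\eta) = \langle \xi, \eta \rangle \1$. Set $c_n := \ell(f_n) + \ell(g_n)^* \in \B(\mathcal F(K))$. Since $S_\infty$ is free as a *-monoid on $\{t_n\}$, the semigroup *-algebra $\C(S_\infty)$ is the free unital *-algebra on $\{\delta_{t_n}\}$, so the assignment $\delta_{t_n} \mapsto c_n$ extends uniquely to a unital *-homomorphism $\pi \colon \C(S_\infty) \to \B(\mathcal F(K))$. (In the language of free probability the $c_n$ form a free family of circular elements for the vacuum state, and the point is that such a family generates the free *-algebra.) It remains to prove that $\pi$ is injective.

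I would argue as follows. For finite words $\mu = \xi_1 \cdots \xi_p$ and $\nu = \eta_1 \cdots \eta_q$ in the basis vectors $f_n, g_n$, write $\ell(\mu) := \ell(\xi_1) \cdots \ell(\xi_p)$; it is a standard fact that the operators $\ell(\mu)\ell(\nu)^*$ form a linearly independent family in $\B(\mathcal F(K))$ (they are the matrix units of the Cuntz--Toeplitz algebra generated by the $\ell(f_n), \ell(g_n)$). Given $w = t_{n_1}^{\eps_1} \cdots t_{n_k}^{\eps_k} \in S_\infty$, expand $\pi(\delta_w) = c_{n_1}^{\eps_1} \cdots c_{n_k}^{\eps_k}$ over the $2^k$ choices of, at each position, the creation summand or the annihilation summand of the relevant factor, and reduce each resulting product of creation and annihilation operators, using $\ell(\xi)^*\ell(\eta) = \langle \xi, \eta \rangle \1$, to a scalar multiple of a single $\ell(\mu)\ell(\nu)^*$. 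The unique all-creation choice yields $\ell(\mu_w)$, where $\mu_w$ is the length-$k$ word recording the data $(n_1,\eps_1), \dots, (n_k,\eps_k)$; every other choice uses at least one annihilation operator, hence at most $k-1$ creation operators, and therefore yields a scalar multiple of an $\ell(\mu)\ell(\nu)^*$ with $\len \mu < k$. Since $w \mapsto \mu_w$ is injective and $\len \mu_w = \len w$, in any vanishing finite linear combination $\sum_w \lambda_w \pi(\delta_w) = 0$, choosing $w_0$ of maximal length with $\lambda_{w_0} \ne 0$, the coefficient of $\ell(\mu_{w_0})$ on the left-hand side equals $\lambda_{w_0}$; linear independence of the $\ell(\mu)\ell(\nu)^*$ then forces $\lambda_{w_0} = 0$, a contradiction. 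Hence $\pi$ is injective. The only real obstacle is this combinatorial bookkeeping for part (ii) — keeping careful control of the lengths of the $\mu$-parts produced by the various choices — together with the cited linear independence of the Cuntz--Toeplitz monomials.
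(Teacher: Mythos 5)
Your proof is correct, but it takes a genuinely different route from the paper. For (i) the paper simply cites Barnes' theorem that the $\ell^1$-algebra (hence the semigroup algebra) of an inverse semigroup admits a faithful *-representation, $BC$ being an inverse semigroup; you instead write down the Toeplitz representation $q \mapsto S$, $p \mapsto S^*$ and check injectivity via the normal form $q^i p^j$ and the (correct, standard) linear independence of the operators $S^i(S^*)^j$. For (ii) the paper cites Barnes--Duncan for a faithful C*-representation of $\C S_2$ (free monoid on $a,b$ with $a^*=b$) and then embeds $S_\infty$ into $S_2$ via $t_n \mapsto a(a^*)^n a$, so all the work is outsourced to the literature plus a one-line *-monomorphism of monoids; you instead build an explicit Fock-space representation $\delta_{t_n} \mapsto \ell(f_n)+\ell(g_n)^*$ and prove injectivity by a leading-term argument (the all-creation term $\ell(\mu_w)$ has maximal creation length, every other term in the expansion loses at least one creation operator), which is sound: your bookkeeping that non-all-creation terms reduce to $\ell(\mu)\ell(\nu)^*$ with $\len\mu<\len w$, the injectivity of $w \mapsto \mu_w$, and the linear independence of the family $\ell(\mu)\ell(\nu)^*$ (provable, e.g., by pairing against $\langle\,\cdot\,\nu_0,\mu_0\rangle$ for a pair with $|\nu_0|$ minimal among nonzero coefficients) together give exactly what is needed. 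Interestingly, your maximal-length/leading-term scheme closely parallels the argument the paper itself uses later for Lemma \ref{1} and Corollary \ref{2}. What your approach buys is a self-contained, citation-free proof with concrete operators (and it would also yield finiteness information via the vacuum expectation framework); what the paper's approach buys is brevity and the reuse of known representation theorems. One cosmetic caveat: the operators $\ell(\mu)\ell(\nu)^*$ are not literally matrix units of the Cuntz--Toeplitz algebra (they do not satisfy matrix-unit relations); only their linear independence is needed, and that claim is true, so this does not affect the argument.
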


\begin{proof}
(i) Since $BC$ is an inverse semigroup, this follows from \cite[Theorem 2.3]{B}. 

(ii) By \cite[Theorem 3.4]{BD} $\C S_2$ admits a faithful C*-representation, where $S_2$ denotes the free monoid on two generators $S_2 = \langle a, b \rangle$, endowed with the involution determined by $a^* = b$. There is a *-monomorphism $S_\infty \hookrightarrow S_2$ defined by 
$t_n \mapsto a(a^*)^na \ (n \in \N)$
and this induces a *-monom\-orphism $\C S_\infty \hookrightarrow \C S_2$. The result follows.
\end{proof}

By a \textit{state} on a unital *-algebra $A$ we mean a linear functional $\mu \colon A \rightarrow \C$ satisfying $\langle a^*a, \mu \rangle \geq 0 \ (a \in A)$ and $\langle 1, \mu \rangle = 1$. We say that a state $\mu$ is \textit{faithful} if $\langle a^*a, \mu \rangle >0 \ (a \in A \setminus \{ 0 \})$. A unital *-algebra with a faithful state admits a faithful C*-representation via the GNS representation associated with the state.

The following theorem appears to be folklore in the theory of free products of $C^*$-algebras; it can be traced back at least to the seminal work of Avitzour~\cite[Proposition~2.3]{A} (see also \cite[Section~4]{BlPaul} for 
a more general result).  

\begin{theorem} \label{7} 
Let $A$ and $B$ be unital $*$-algebras which admit faithful
states. Then their free product $A*B$ also admits a faithful
state, and hence it has a faithful C*-re\-pre\-sen\-ta\-tion.
\end{theorem}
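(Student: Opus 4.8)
The plan is to construct the \emph{free product state} $\mu := \mu_A * \mu_B$ on $A * B$ out of given faithful states $\mu_A$ on $A$ and $\mu_B$ on $B$ by means of the free-product GNS construction (following Avitzour \cite{A}), and then to observe that the faithfulness of $\mu_A$ and $\mu_B$ forces $\mu$ itself to be faithful. The second assertion of the theorem then follows from the remark recorded just before its statement.

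Since $\mu_A$ is faithful, the sesquilinear form $\langle a, b\rangle_A := \langle b^* a, \mu_A\rangle$ is an inner product on $A$; let $H_A$ be the resulting (in general non-complete) pre-Hilbert space, write $\xi_A := 1$ for the cyclic unit vector, and let $\pi_A(a)$ denote the operator of left multiplication by $a$ on $H_A$. One has the orthogonal decomposition $H_A = \C\xi_A \oplus H_A^\circ$, where $H_A^\circ = \{\, a \in A : \langle a, \mu_A\rangle = 0 \,\}$; because $\mu_A$ is faithful there is no quotient in the construction, so $H_A^\circ$ is \emph{literally} a linear complement of $\C 1$ in $A$. The operator $\pi_A(a)$ need not be bounded, so we work throughout with adjointable operators on pre-Hilbert spaces: $\pi_A$ is a unital $*$-homomorphism from $A$ into the unital $*$-algebra $\mathcal L(H_A)$ of linear maps on $H_A$ that admit a (necessarily unique) formal adjoint, with $\pi_A(a)^* = \pi_A(a^*)$. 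Define $H_B$, $\xi_B$, $\pi_B$, $H_B^\circ$ in the same way.

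Next form the free-product pre-Hilbert space
\[ H \;=\; \C\Omega \;\oplus\; \bigoplus_{n \geq 1}\ \ \bigoplus_{\substack{i_1, \dots, i_n \in \{A, B\}\\ i_1 \neq i_2 \neq \cdots \neq i_n}} H_{i_1}^\circ \otimes \cdots \otimes H_{i_n}^\circ \]
(algebraic tensor products and direct sum), with $\Omega$ a distinguished unit vector, and for each $i \in \{A, B\}$ let $V_i \colon H \to H_i \otimes \tilde H_i$ be the canonical unitary, where $\tilde H_i$ is the subspace of $H$ spanned by $\Omega$ together with all reduced tensors whose first factor is not taken from $H_i^\circ$. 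Putting $\lambda_i(c) := V_i^*(\pi_i(c) \otimes \mathrm{id})V_i$ for $c$ in the $i$-th algebra yields unital $*$-homomorphisms $\lambda_A \colon A \to \mathcal L(H)$ and $\lambda_B \colon B \to \mathcal L(H)$, which by the universal property of the free product of unital $*$-algebras amalgamate into a unital $*$-homomorphism $\lambda \colon A * B \to \mathcal L(H)$. The crux is the following identity, proved by a direct computation: under the vector-space identification of $A * B$ with $H$ that sends $1$ to $\Omega$ and each reduced word $c_1 c_2 \cdots c_n$ (with $c_k$ in the complement $H_{i_k}^\circ$ of $\C 1$ in the $i_k$-th algebra, $i_1 \neq \cdots \neq i_n$) to the elementary tensor $c_1 \otimes c_2 \otimes \cdots \otimes c_n$, one has $\lambda(x)\Omega = x$ for every $x \in A * B$. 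Indeed, evaluating $\lambda(c_1 \cdots c_n)\Omega = \lambda_{i_1}(c_1) \cdots \lambda_{i_n}(c_n)\Omega$ from the right, the defining formula for the $\lambda_i$ shows that each factor is left-``created'' in turn, since at every stage the partial product is a reduced tensor whose first factor does not lie in the space on which the next operator is built. In particular $x \mapsto \lambda(x)\Omega$ is a linear bijection of $A * B$ onto $H$.

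Now define $\langle x, \mu\rangle := \langle \lambda(x)\Omega, \Omega\rangle_H$ for $x \in A * B$. Then $\langle 1, \mu\rangle = \|\Omega\|^2 = 1$, and, since $\lambda$ is a $*$-homomorphism into $\mathcal L(H)$,
\[ \langle x^* x, \mu\rangle = \langle \lambda(x)^* \lambda(x)\Omega, \Omega\rangle_H = \|\lambda(x)\Omega\|_H^2 \geq 0 \qquad (x \in A * B), \]
so $\mu$ is a state; moreover this last quantity is strictly positive whenever $x \neq 0$, because $\lambda(x)\Omega = 0$ forces $x = 0$ by the bijectivity just established. Hence $\mu$ is a faithful state on $A * B$, and therefore $A * B$ admits a faithful C*-representation via the GNS construction, as recalled above. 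The main obstacle is the careful set-up in the middle two paragraphs — building the free product of pre-Hilbert spaces and GNS representations when the representing operators may be unbounded, and verifying the action formula $\lambda(x)\Omega = x$; the remaining verifications are routine.
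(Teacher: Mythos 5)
Your argument is correct, but it cannot be compared with a proof in the paper, because the paper does not prove Theorem \ref{7}: it is quoted as folklore, with references to Avitzour \cite[Proposition 2.3]{A} and Blecher--Paulsen \cite{BlPaul}, and those sources work in the C*-algebraic setting. What you have written out is essentially that folklore argument -- the free Fock-type product of the GNS spaces with the vacuum vector state -- transplanted to the purely algebraic setting, which is in fact the right level of generality here, since the theorem is stated for arbitrary unital *-algebras and the GNS operators of a faithful state on a *-algebra may be unbounded; your device of working with adjointable operators on pre-Hilbert spaces handles exactly the point at which Avitzour's C*-statement does not apply verbatim, and the key computation $\lambda(c_1\cdots c_n)\Omega = c_1\otimes\cdots\otimes c_n$ for reduced words with letters in $\ker\mu_A$, $\ker\mu_B$ is carried out correctly. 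Two standard facts are used tacitly and deserve a word: (a) the positive-definiteness of the inner product on algebraic tensor products of inner-product spaces, and (b) the linear structure theorem for free products, namely that $A*B$ decomposes as $\C 1$ plus the direct sum of the alternating products $A^\circ\cdots B^\circ$, each canonically isomorphic to the corresponding tensor product -- this is what legitimises your ``vector-space identification'' of $A*B$ with $H$. Point (b) can even be extracted from your own computation rather than assumed: the multiplication map $\Phi\colon H\to A*B$ (well defined by multilinearity, surjective by the usual expansion of words into scalar and mean-zero parts) satisfies $\lambda(\Phi(\eta))\Omega=\eta$ for all $\eta\in H$, so $\Phi$ is a bijection and $\lambda(x)\Omega=\Phi^{-1}(x)\neq 0$ for $x\neq 0$, which is all that faithfulness requires. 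The final passage from a faithful state to a faithful C*-representation is, as you say, the remark in the paper's preliminaries, so your proof is consistent with how the paper uses the theorem (in Lemma \ref{6}, where the states are restrictions of states on C*-completions and the relevant GNS operators are in fact bounded).
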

\noindent
We make use of this result in our next lemma.

\begin{lemma} \label{6} 
The unital *-algebra $\C(BC*S_\infty)$ admits a faithful C*-repres\-entation.
\end{lemma}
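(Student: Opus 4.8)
The plan is to combine Lemma~\ref{6a} and Theorem~\ref{7}. By Lemma~\ref{6a}, both $\C(BC)$ and $\C(S_\infty)$ admit faithful C*-representations, hence faithful C*-completions. To apply Theorem~\ref{7} we need faithful \emph{states} rather than merely faithful representations, so the first step is to upgrade each faithful C*-representation to a faithful state. This is where a little care is needed: a faithful *-representation $\pi\colon A\hookrightarrow\B(H)$ with $H$ separable need not immediately give a faithful state, but one can choose a cyclic-type vector. Concretely, since each of $BC$ and $S_\infty$ is countable, the algebras $\C(BC)$ and $\C(S_\infty)$ are countably dimensional, so their images under a faithful C*-representation act on a separable Hilbert space $H$; picking a vector $\xi\in H$ that is not annihilated by any nonzero element of the (countable-dimensional, hence "small") image — obtained as usual by a Baire-category or explicit diagonal argument over a countable basis, or simply by taking $H=\ell^2$ of the GNS construction summed over a countable separating family of states — yields a faithful state $a\mapsto\langle\pi(a)\xi,\xi\rangle$ on the respective *-algebra.

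Once faithful states on $\C(BC)$ and $\C(S_\infty)$ are in hand, the second step is essentially automatic: Theorem~\ref{7} applies to $A=\C(BC)$ and $B=\C(S_\infty)$ and produces a faithful state on the free product $A*B=\C(BC)*\C(S_\infty)$. The third step is the identification $\C(BC*S_\infty)\cong\C(BC)*\C(S_\infty)$ as unital *-algebras, which the excerpt has already recorded as a consequence of the universal property of free products (the isomorphism $\C(S*T)\cong(\C S)*(\C T)$ holding in the category of unital *-algebras). Transporting the faithful state along this isomorphism gives a faithful state on $\C(BC*S_\infty)$, and then the remark that a unital *-algebra with a faithful state admits a faithful C*-representation via the GNS construction finishes the proof.

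The only real obstacle is the passage from "faithful C*-representation" to "faithful state", since Theorem~\ref{7} is stated in terms of states. If one prefers to avoid the separability/Baire argument, an alternative is to observe that $\C(BC)$ and $\C(S_\infty)$ are in fact both of countable dimension, enumerate a linear basis, and build a faithful state as a suitably weighted sum $\mu=\sum_n 2^{-n}\mu_n$ of states $\mu_n$ with $\langle x_n^*x_n,\mu_n\rangle>0$ for the $n$-th basis element $x_n$; positivity is preserved under such convex-type combinations and faithfulness follows because any nonzero $a$ has $\langle a^*a,\mu_{n}\rangle>0$ for at least one $n$ once one checks the relevant state exists (again via GNS from the faithful C*-representation). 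Either way this is routine, and with it the lemma follows by the chain Lemma~\ref{6a} $\Rightarrow$ faithful states $\Rightarrow$ Theorem~\ref{7} $\Rightarrow$ faithful state on $\C(BC)*\C(S_\infty)\cong\C(BC*S_\infty)$ $\Rightarrow$ faithful C*-representation.
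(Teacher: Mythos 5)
Your overall route (Lemma \ref{6a} for the two free factors, an upgrade to faithful states, Theorem \ref{7}, the identification $\C(BC*S_\infty)\cong(\C BC)*(\C S_\infty)$, then GNS) is exactly the paper's, and you correctly single out the one nontrivial step: passing from a faithful C*-representation to a faithful state. However, neither of your two proposed ways of making that step is valid as stated. A vector state $a\mapsto\langle\pi(a)\xi,\xi\rangle$ is faithful precisely when $\xi$ is separating for $\pi(A)$, i.e.\ $\pi(a)\xi\neq0$ for every nonzero $a$, and such a vector need not exist for a countably dimensional *-algebra represented faithfully on a separable Hilbert space: take the linear span of the matrix units $e_{ij}$ (the finite-rank operators with finitely many nonzero entries) on $\ell^2$; for any vector $\xi$ with, say, $\xi_1\neq0$ or $\xi_2 \neq 0$, the nonzero element $\xi_2e_{11}-\xi_1e_{12}$ annihilates $\xi$. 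No Baire-category or diagonal argument can produce what does not exist; the kernels in question are indexed by the (uncountable) set of nonzero elements of the image, not by a countable family. Your fallback --- choose states $\mu_n$ with $\langle x_n^*x_n,\mu_n\rangle>0$ for a linear basis $(x_n)$ and set $\mu=\sum_n2^{-n}\mu_n$ --- also fails: positivity of $\mu$ on the squares of basis elements gives no control over linear combinations, and the claim that every nonzero $a$ satisfies $\langle a^*a,\mu_n\rangle>0$ for some $n$ is precisely what needs proof. (Already in $\C^2$ with basis $x_1=(1,0)$, $x_2=(1,1)$ one may take every $\mu_n$ to be evaluation at the first coordinate; the resulting $\mu$ kills $(0,1)$.) Your remaining suggestion, GNS over ``a countable separating family of states'', presupposes exactly the object whose existence is at issue.

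The gap is closed by working with the completions rather than with the algebras themselves, which is what the paper does. Since $\C BC$ and $\C S_\infty$ have countable dimension, their C*-completions (which exist by Lemma \ref{6a}) are separable C*-algebras. For a separable unital C*-algebra $A$, the unit ball of $A^*$ is weak*-compact and metrizable, hence separable, so the state space $S(A)$ contains a weak*-dense sequence $\{\rho_n : n\in\N\}$; then $\rho=\sum_{n=1}^\infty2^{-n}\rho_n$ is a faithful state, because $\langle a^*a,\rho\rangle=0$ forces $\langle a^*a,\rho_n\rangle=0$ for all $n$, hence by density $\langle a^*a,\sigma\rangle=0$ for every state $\sigma$, and states of a C*-algebra separate positive elements, so $a=0$. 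It is the density of the chosen sequence in the whole state space (available only after completing, via separability) that yields faithfulness --- not positivity on a basis. Restricting $\rho$ to the dense *-subalgebras $\C BC$ and $\C S_\infty$ gives the faithful states that Theorem \ref{7} requires; from that point on your argument proceeds exactly as written.
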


\begin{proof}
We first remark that a separable C*-algebra $A$ always admits a faithful state. To see this, note that the unit ball of $A^*$ with the weak*-topology is a compact metric space, and hence also separable. It follows that the set of states $S(A)$ is weak*-separable. Taking $\{ \rho_n : n \in \N \}$ to be a dense subset of $S(A)$, we then define $\rho = \sum_{n=1}^\infty 2^{-n} \rho_n$, which is easily seen to be a faithful state on $A$. 

By Lemma \ref{6a}, both $\C(BC)$ and $\C(S_\infty)$ admit C*-completions. Since both of these algebras have countable dimension, their C*-completions are separable, and, as such, each admits a faithful state, which we may then restrict to obtain faithful states on $\C BC$ and $\C S_\infty$. By Theorem \ref{7}, $(\C BC)*(\C S_\infty) \cong {\C(BC*S_\infty)}$ admits a faithful C*-representation.
\end{proof}

\section{Proof of Theorem \ref{-1}}
\noindent
The main idea of the proof is to embed $\C S_\infty$, which is finite, as a dense *-subalgebra of some C*-completion of $\C(BC*S_\infty)$, which will necessarily be infinite. In fact we have the following:
\begin{lemma} \label{0} 
The *-algebra $\C S_\infty$ is stably finite.
\end{lemma}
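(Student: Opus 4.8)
The plan is to show that $M_n(\C S_\infty)$ is finite for every $n$ by exhibiting a *-representation that is sufficiently faithful on the relevant elements and lands in a C*-algebra known to be stably finite. The cleanest route exploits the fact that $S_\infty$, being a \emph{free} *-monoid, maps onto a great many groups; concretely, I would consider *-homomorphisms $\C S_\infty \to \C G$ obtained by sending each generator $t_n$ to a group element (so $t_n^*$ goes to its inverse), for $G$ ranging over finitely generated groups. Since a left-invertible matrix $M \in M_n(\C S_\infty)$ and a putative right inverse $N$ each involve only finitely many of the generators $t_1, t_2, \ldots$, I can work with the sub-*-algebra generated by finitely many $t_i$, i.e.\ with $\C F_k$ where $F_k$ is the free group on $k$ letters, after quotienting. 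Thus it suffices to prove that $\C F_k$ is stably finite for every $k$.

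The key step is then the known fact that the group von Neumann algebra $L(F_k)$ (equivalently, the reduced group C*-algebra $C^*_r(F_k)$) is stably finite: $L(F_k)$ is a finite von Neumann algebra because it carries a faithful normal tracial state (the canonical trace $\tau(\sum c_g g) = c_e$), and a von Neumann algebra with a faithful trace is finite; moreover $M_n(L(F_k)) = L(F_k) \otimes M_n$ again carries a faithful normal trace, hence is finite as well. Since $\C F_k$ embeds *-isomorphically into $L(F_k)$, and finiteness passes to unital subalgebras (if $uv = 1$ in a subalgebra and $vu = 1$ in the overalgebra, then $vu = 1$ already in the subalgebra), we conclude that $M_n(\C F_k)$ is finite for all $n$.

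Putting this together: given $M, N \in M_n(\C S_\infty)$ with $MN = I_n$, choose $k$ large enough that all entries of $M$ and $N$ lie in the sub-*-algebra generated by $t_1, \ldots, t_k$; the natural surjection $S_\infty \to F_\infty$ onto the free group restricts to a *-homomorphism carrying this sub-*-algebra into $\C F_k$. But in fact one does not even lose information here, because the quotient map $\C S_\infty \to \C F_\infty$ need not be injective, so more care is needed: rather than pushing forward, I would instead directly cite a faithful C*-representation of $\C S_\infty$ — which exists by Lemma \ref{6a}(ii) — but that C*-algebra is a quotient of the full free-semigroup C*-algebra and is not obviously stably finite, so this naive approach stalls. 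The honest fix, and the step I expect to be the main obstacle, is to produce a \emph{single} faithful *-representation of $\C S_\infty$ into a stably finite C*-algebra; the way to do this is to use the embedding $\C S_\infty \hookrightarrow \C S_2$ from Lemma \ref{6a}(ii) together with a faithful trace on a C*-completion of $\C S_2$, and to observe that the GNS construction from a faithful \emph{tracial} state yields a finite von Neumann algebra whose matrix amplifications are again finite.

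Concretely, I would argue as follows. By the proof of Lemma \ref{6a}(ii), $\C S_\infty$ embeds *-isomorphically into $\C S_2$, which admits a faithful C*-representation; being of countable dimension, this C*-completion is separable, hence (as in the proof of Lemma \ref{6}) admits a faithful state $\mu$. The point I need is that $\mu$ can be chosen to be a \emph{trace}, or at least that some faithful tracial state exists — this holds because the free monoid $S_2$ with $a^* = b$ sits inside the free \emph{group} on one generator after identifying $b = a^{-1}$... which again collapses too much. The genuinely correct statement to invoke is that the universal C*-algebra generated by the relations defining $\C S_\infty$'s faithful representation carries the canonical trace $\delta_w \mapsto [w = e]$, positivity of which on $a^*a$ for $a \in \C S_\infty$ is exactly the statement that distinct reduced words are orthonormal — and this is a tracial state precisely on the \emph{group} algebra, not the monoid algebra. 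Therefore the clean and rigorous path is: embed $\C S_\infty$ into $\C(F_\infty)$ via $t_n \mapsto g_n$ \emph{and verify this embedding is injective} (which it is, since the words in the $t_n$ and $t_n^*$ appearing in a general element of $\C S_\infty$ are already reduced group words once we never cancel $t_n t_n^*$, i.e.\ $\C S_\infty$ is the "free *-algebra" whose natural basis maps bijectively onto distinct group elements only if no relation $t_n t_n^* = e$ is imposed — so the map is \emph{not} injective). The upshot is that the real proof must use the monoid structure directly: $S_\infty$ embeds in $S_2 = \langle a, b\rangle$, $\C S_2$ has a faithful tracial state given by $w \mapsto [w = e]$ since in the \emph{free monoid} distinct words $w$ give orthogonal $\delta_w$ and the trace property $\tau(\delta_v \delta_w) = [vw = e] = [wv = e] = \tau(\delta_w \delta_v)$ holds because $vw = e$ in a free monoid forces $v = w = e$. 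This faithful trace on $\C S_2$ restricts to a faithful trace on $\C S_\infty$, its GNS representation generates a finite von Neumann algebra $\mathcal M$, every $M_n(\mathcal M)$ is finite, and $M_n(\C S_\infty) \hookrightarrow M_n(\mathcal M)$, so $M_n(\C S_\infty)$ is finite for every $n$. The main obstacle, as the above false starts illustrate, is precisely to identify the right ambient object: one must resist pushing forward to a group and instead use the trace $w \mapsto [w=e]$ on the free-\emph{monoid} algebra, whose trace property is a small but crucial combinatorial observation.
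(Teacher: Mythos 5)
Your final argument hinges on the claim that $\tau \colon \delta_w \mapsto \1_{w,e}$ is a \emph{faithful} tracial state on $\C S_2$, and this is where the proof breaks down. The functional $\tau$ is indeed positive and tracial (as you note, $vw=e$ in a free monoid forces $v=w=e$), but it is very far from faithful: for $a=\sum_w c_w\delta_w$ one computes $\tau(a^*a)=\sum_{v,w}\overline{c_v}\,c_w\,\1_{v^*w,e}=|c_e|^2$, because $v^*w=e$ in $S_2$ forces $v=w=e$. In other words the basis vectors $\delta_w$ with $w\neq e$ are \emph{null} vectors for the GNS form, not orthonormal ones; your assertion that ``distinct words give orthogonal $\delta_w$'' conflates the monoid algebra with the group algebra, where $v^{-1}w=e$ if and only if $v=w$. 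In fact $\tau$ is multiplicative, $\tau(ab)=\tau(a)\tau(b)$, so its GNS representation is one-dimensional and certainly does not contain a copy of $\C S_\infty$. Since the finite von Neumann algebra $\mathcal M$ in your conclusion is built from this trace, the argument as written fails; so does the earlier variant via $\C F_\infty$, for the reason you yourself identify (the *-homomorphism $\delta_{t_n}\mapsto\delta_{g_n}$ kills $\delta_{t_nt_n^*}-\delta_e$).

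The fix is the step you explicitly talked yourself out of: push forward to a group, but only as an \emph{algebra} map. Finiteness of $M_n(A)$ is a purely algebraic property that ignores the involution, so one needs only an algebra embedding into a stably finite algebra, not a *-embedding. The free monoid $S_2=\langle a,b\rangle$ embeds into the free group $F_2$ on the same two letters (positive words are already reduced, so no collapsing occurs -- in contrast with the identification $b=a^{-1}$ in your false start), whence $\C S_\infty\hookrightarrow\C S_2\hookrightarrow\C F_2\hookrightarrow\operatorname{vN}(F_2)$ as algebras, and $\operatorname{vN}(F_2)$ is stably finite because it is a C*-algebra with a faithful tracial state. This is exactly the paper's proof. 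Your remaining ingredients -- finiteness of matrix algebras over a von Neumann algebra with a faithful trace, and the descent of (stable) finiteness to subalgebras -- are correct, and the reduction to finitely many generators is harmless but unnecessary.
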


\begin{proof}
As we remarked in the proof of Lemma \ref{6a}, $\C S_\infty$ embeds into $\C S_2$. It is also clear that, as an algebra, $\C S_2$ embeds into $\C F_2$, where $F_2$ denotes the free group on two generators. Hence $\C S_\infty$ embeds into $\operatorname{vN}(F_2)$, the group von Neumann algebra of $F_2$, which is stably finite since it is a C*-algebra with a faithful tracial state. It follows that $\C S_\infty$ is stably finite as well.
\end{proof}

We shall next define a notion of length for elements of $BC*S_\infty$. Indeed, each $u \in {(BC*S_\infty) \setminus \{e \}}$ has a unique expression of the form $w_1w_2 \cdots w_n$, for some $n \in \N$ and some 
$w_1, \ldots, w_n \in \left(BC \setminus \lbrace e \rbrace \right) \cup \lbrace t_j,  t_j^* :j \in \N \rbrace,$
satisfying $w_{i+1} \in \lbrace t_j,  t_j^* :j \in \N \rbrace$ whenever $w_i \in  BC \setminus \lbrace e \rbrace \ (i=1, \ldots, n-1)$.
We then define $\len u =n $ for this value of $n$, and set $\len e = 0$. 
This also gives a definition of length for elements of $S_\infty$ by considering $S_\infty$ as a submonoid of $BC*S_\infty$ in the natural way. For $m \in \N_0$ we set
$$L_m(BC*S_\infty) = \{ u \in BC*S_\infty : \len u \leq m \}, \quad
L_m(S_\infty) = \{ u \in S_\infty : \len u \leq m \}.$$

We now describe our embedding of $\C S_\infty$ into $\C (BC*S_\infty$). By Lemma \ref{6}, $\C(BC*S_\infty)$ has a C*-completion $(A, \Vert \cdot \Vert)$. Let $\gamma_n = (n \Vert \delta_{t_n} \Vert)^{-1} \ (n \in \N)$ and define elements $a_n$ in ${\C (BC*S_\infty)}$ by $a_n = \delta_p + \gamma_n \delta_{t_n} \ (n \in \N)$,  so that $a_n \rightarrow \delta_p$ as $ n \rightarrow \infty$. Using the universal property of $S_\infty$ we may define a unital *-homomorphism $\varphi \colon \C S_\infty  \rightarrow  \C (BC*S_\infty)$ by setting $\varphi(\delta_{t_n}) = a_n \ (n \in  \N)$ and extending to $\C S_\infty$.  In what follows, given a monoid $S$ and $s \in S$, $\delta_s'$ will denote the linear functional on $\C S$ defined by 
$\langle \delta_t, \delta_s' \rangle = \1_{s, t} \quad (t \in S),$
 where $\1_{s, t} = 1$ if $s=t$ and $\1_{s, t} = 0$ otherwise. 

\begin{lemma} \label{1} 
Let $w \in S_\infty$ with $\len w = m$. Then 
\begin{enumerate}
\item[\rm (i)] $\varphi(\delta_w) \in \spn \{ \delta_u :  u \in L_m(BC*S_\infty) \}$;
\item[\rm (ii)] for each $y \in L_m(S_\infty)$ we have
$$\langle \varphi(\delta_y), \delta_w' \rangle \neq 0 \Leftrightarrow y = w.$$
\end{enumerate}
\end{lemma}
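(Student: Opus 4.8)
The plan is to prove both parts by induction on $m = \len w$, working directly with the expansion of $\varphi(\delta_w)$ in the basis $\{\delta_u : u \in BC*S_\infty\}$. First I would settle the base case $m=0$, where $w = e$ and $\varphi(\delta_e) = \delta_e$, making both assertions trivial. For the inductive step, I would write $w = w' t_n^{\pm 1}$ with $\len w' = m-1$ (one must handle the two cases $w = w' t_n$ and $w = w' t_n^*$, but they are symmetric under the involution, so it suffices to treat $w = w' t_n$). Then $\varphi(\delta_w) = \varphi(\delta_{w'})\, a_n = \varphi(\delta_{w'})(\delta_p + \gamma_n \delta_{t_n})$. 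By the inductive hypothesis, $\varphi(\delta_{w'})$ is a linear combination of $\delta_u$ with $u \in L_{m-1}(BC*S_\infty)$, so I need to understand $\delta_u \delta_p$ and $\delta_u \delta_{t_n}$ for such $u$, and in particular bound the length of $up$ and $ut_n$ in $BC*S_\infty$.

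For part (i), the key observation is that right-multiplication by $p \in BC$ or by $t_n \in S_\infty$ increases length by at most $1$: if $u = w_1 \cdots w_k$ is the normal form and the last syllable $w_k$ lies in $BC\setminus\{e\}$, then $up$ either has normal form $w_1 \cdots (w_k p)$ if $w_k p \neq e$ (length $\le k$) or collapses to $w_1 \cdots w_{k-1}$ (length $k-1$); multiplying by $t_n$ simply appends a syllable, giving length $k+1$ unless the last syllable is already in $S_\infty\setminus\{e\}$, in which case it merges. In all cases the length of $up$ and $ut_n$ is at most $(m-1)+1 = m$, which gives (i). This part is essentially a bookkeeping argument about normal forms in the free product, and I expect it to be routine once the normal-form description preceding the lemma is in hand.

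Part (ii) is the substantive one, and I expect the \emph{leading-term} analysis to be the main obstacle. The idea is that among all the $\delta_u$ appearing in $\varphi(\delta_w)$, the basis vector $\delta_w$ itself appears with a nonzero coefficient — in fact with coefficient $\gamma_{n_1}\gamma_{n_2}\cdots\gamma_{n_m}$ if $w = t_{n_1}^{\pm}\cdots t_{n_m}^{\pm}$ — because the only way to build $w$ from the product $(\delta_p + \gamma_{n_1}\delta_{t_{n_1}^\pm})\cdots$ is to pick the $\gamma \delta_t$ term at every stage; picking any $\delta_p$ factor would either introduce a $p$ or $q$ syllable that cannot later be cancelled down to land in $S_\infty$ while keeping length $\le m$, or would shorten the word below length $m$. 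Made precise: I would show by induction that for $y \in L_m(S_\infty)$, the coefficient $\langle \varphi(\delta_y), \delta_w'\rangle$ is nonzero only if $y = w$, using that if $\len y = k \le m$ then $\varphi(\delta_y) \in \spn\{\delta_u : u \in L_k\}$ by (i), so a word $w$ of length exactly $m$ can only be hit when $k = m$; and then a direct comparison of normal forms, tracking that the $S_\infty$-syllables of any $\delta_u$ occurring in $\varphi(\delta_y)$ with $\len u = m$ must read off exactly the syllables of $y$. The delicate point is ruling out "accidental" cancellations in $BC$ (where $pq = e$) conspiring to reproduce $w$ from a different $y$; this is controlled by the fact that $\delta_p$ only ever contributes a $p$-syllable on the right, never a $q$, so no cancellation inside $BC$ can occur in these products, and hence the $S_\infty$-syllables are never disturbed.
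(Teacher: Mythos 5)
Your overall strategy --- induction on $m$, with part (i) a length-bookkeeping argument and part (ii) a leading-term analysis --- is the same as the paper's, and part (i) is essentially correct as you describe it. The problem lies in the key exclusion step of part (ii). You assert that ``$\delta_p$ only ever contributes a $p$-syllable on the right, never a $q$, so no cancellation inside $BC$ can occur in these products.'' This is false: the factor of $\varphi(\delta_y)$ corresponding to a starred generator is $\varphi(\delta_{t_j^*}) = a_j^* = \delta_q + \gamma_j\delta_{t_j^*}$, so the expansion does contain $q$'s, and a $p$ chosen from one factor followed by a $q$ chosen from the next does cancel. For instance,
$$\varphi(\delta_{t_1t_1^*}) = \delta_{pq} + \gamma_1\delta_{pt_1^*} + \gamma_1\delta_{t_1q} + \gamma_1^2\delta_{t_1t_1^*} = \delta_e + \gamma_1\delta_{pt_1^*} + \gamma_1\delta_{t_1q} + \gamma_1^2\delta_{t_1t_1^*},$$
where the relation $pq=e$ has genuinely fired. (The same blind spot appears when you dismiss the case $w = w't_n^*$ as ``symmetric under the involution'': the involution reverses products, so it converts right multiplication by $t_n^*$ into \emph{left} multiplication by $t_n$ and does not reduce one case to the other; the two cases are merely analogous, with $\delta_q$ in place of $\delta_p$ --- and it is precisely in the $\delta_q$ case that cancellation can occur.)

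The correct reason the unwanted terms cannot produce $\delta_w$ is not that cancellation is impossible but that it is harmless: whenever a $BC$-letter is chosen from some factor, the resulting word either retains a $p$ or a $q$ in its normal form --- hence lies outside $S_\infty$ and cannot equal $w$ --- or else the chosen $BC$-letters cancel away, in which case the length drops strictly below $m$ and again the word cannot equal $w$. The paper packages this as a one-step induction: writing $y = uz$ with, say, $z = t_j$, every term $\delta_{sp}$ arising from $\varphi(\delta_u)(\delta_p+\gamma_j\delta_{t_j})$ satisfies either $\len(sp) < m$ or ``$sp$ ends in $p$,'' so only the terms $\delta_{st_j}$ can contribute to $\langle\varphi(\delta_y),\delta_w'\rangle$, and $st_j = w = vx$ forces $s=v$ and $t_j = x$; this yields $\langle\varphi(\delta_y),\delta_w'\rangle = \gamma_j\1_{t_j,x}\langle\varphi(\delta_u),\delta_v'\rangle$, and the induction closes. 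If you replace your ``no cancellation'' claim with this dichotomy (or with the global observation that any term landing in $S_\infty$ after a $BC$-choice has length strictly less than $m$), your argument goes through.
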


\begin{proof}
We proceed by induction on $m$. When $m =0$, $w$ is forced to be $e$ and hence, as $\varphi$ is unital, $\varphi(\delta_e)= \delta_e$, so that (i) is satisfied. In (ii), $y$ is also equal to $e$, so that (ii) is trivially satisfied as well. 

Assume $m \geq 1$ and that (i) and (ii) hold for all elements of $L_{m-1}(S_\infty)$. We can write $w$ as $w=vx$ for some $v \in S_\infty$ with $\len v = m-1$ and some $x \in \{ t_j, t_j^* : j \in \N \}$. 

First consider (i). By the induction hypothesis, we can write $\varphi( \delta_v) = \sum_{u \in E} \alpha_u \delta_u$, for some finite set $E \subset L_{m-1}(BC*S_\infty)$ and some scalars  $\alpha_u \in \C \ (u \in E)$. Suppose that $x = t_j$ for some $j \in \N$. Then
$$\varphi(\delta_w) = \varphi(\delta_v) \varphi(\delta_{t_j}) = \left( \sum_{u \in E} \alpha_u \delta_u \right)(\delta_p + \gamma_j\delta_{t_j})
= \sum_{u \in E} \alpha_u \delta_{up} + \alpha_u \gamma_j \delta_{ut_j},$$
which belongs to $\spn \{ \delta_u :  u \in L_m(BC*S_\infty) \}$ because 
$$\len(up) \leq \len(u) + 1 \leq  m \quad \text{and}  \quad
\len(ut_j) = \len(u) +1 \leq m$$
for each $u \in L_{m-1}(BC*S_\infty)$. The case $x=t_j^*$ is established analogously.

Next consider (ii). Let $y \in L_m(S_\infty)$. If $\len y \leq m-1$ then, by (i), we know that $\varphi(\delta_y) \in \spn \{\delta_u : u \in  L_{m-1}(BC*S_\infty) \} \subset \ker \delta_w'$. Hence in this case $y \neq w$ and $\langle \varphi(\delta_y), \delta_w' \rangle = 0$.

Now suppose instead that $\len y = m$, and write $y = uz$ for some ${u \in L_{m-1}(S_\infty)}$ and $z \in  \{ t_j, t_j^*: j \in \N \}$. By (i) we may write $\varphi(\delta_u) = \sum_{s \in F} \beta_s \delta_s$ for some finite subset $F \subset L_{m-1}(BC*S_\infty)$ and some scalars $\beta_s \in \C \ (s \in F)$, and we may  assume that $v \in F$ (possibly with $\beta_v = 0$). We prove the result in the case that $z = t_j$ for some $j \in  \N$, with the argument for the case $z=t_j^*$ being almost identical. We have $\varphi(\delta_z) = \delta_p+\gamma_j \delta_{t_j}$ and it follows that 
$$\varphi(\delta_y) = \varphi(\delta_u) \varphi(\delta_z) = \sum_{s  \in F} \beta_s \delta_{sp} + \beta_s\gamma_j\delta_{st_j}.$$
Observe that $sp \neq w$ for each $s \in F$.
This is because we either have $\len(sp) < m = \len(w)$, or else $sp$ ends in $p$ when considered as a word over the alphabet $\{p, p^* \} \cup \lbrace t_j, t_j^* :j \in \N \rbrace$, whereas $w \in S_\infty$.
Moreover, given $s \in F$, $st_j = w = vx$ if and only if $s = v$ and $t_j = x$. Hence
$$ \langle \varphi(\delta_y), \delta_w' \rangle = \beta_v\gamma_j\1_{t_j, x} = \langle \varphi(\delta_u), \delta_v' \rangle \gamma_j \1_{t_j, x}.$$
As $\gamma_j > 0$, this implies that $\langle \varphi(\delta_y), \delta_w' \rangle \neq 0$ if and only if $\langle \varphi(\delta_u), \delta_v' \rangle \neq 0$ and $t_j = x$, which, by the induction hypothesis, occurs if and only if $u=v$ and $t_j = x$. This final statement is equivalent to $y=w$. 
\end{proof}

\begin{corollary} \label{2} 
The map $\varphi$ is injective.
\end{corollary}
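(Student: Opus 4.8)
The plan is to deduce injectivity of $\varphi$ directly from Lemma \ref{1} by means of a ``leading-term'' argument with respect to the length function $\len$ on $S_\infty$. Suppose, for contradiction, that $a \in \C S_\infty$ is non-zero and satisfies $\varphi(a) = 0$. Since $\{ \delta_w : w \in S_\infty \}$ is a linear basis for $\C S_\infty$, we may write $a = \sum_{w \in G} \lambda_w \delta_w$ for some non-empty finite set $G \subset S_\infty$ and non-zero scalars $\lambda_w \in \C \ (w \in G)$. Set $m = \max \{ \len w : w \in G \}$, which exists because $G$ is finite, and choose $w_0 \in G$ with $\len w_0 = m$.

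First I would apply the linear functional $\delta_{w_0}'$ to the identity $\varphi(a) = 0$, obtaining
$$0 = \langle \varphi(a), \delta_{w_0}' \rangle = \sum_{w \in G} \lambda_w \langle \varphi(\delta_w), \delta_{w_0}' \rangle.$$
Every $w \in G$ satisfies $\len w \leq m$, so $w \in L_m(S_\infty)$, and since $\len w_0 = m$, Lemma \ref{1}(ii) (applied with the r\^ole of ``$w$'' played by $w_0$ and that of ``$y$'' by $w$) shows that $\langle \varphi(\delta_w), \delta_{w_0}' \rangle \neq 0$ if and only if $w = w_0$. Hence the sum collapses to the single term $w = w_0$, giving $0 = \lambda_{w_0} \langle \varphi(\delta_{w_0}), \delta_{w_0}' \rangle$. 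But $\lambda_{w_0} \neq 0$, and by Lemma \ref{1}(ii) again (now with $y = w = w_0$) we have $\langle \varphi(\delta_{w_0}), \delta_{w_0}' \rangle \neq 0$, so the right-hand side is non-zero — a contradiction. Therefore $\ker \varphi = \{ 0 \}$, that is, $\varphi$ is injective.

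There is essentially no remaining obstacle: all the substantive work has been carried out in Lemma \ref{1}, whose two parts do precisely what is needed here. Part (i) ensures that $\varphi(\delta_w)$ for $w$ of length strictly less than $m$ contributes nothing to the coefficient of $\delta_{w_0}$, while part (ii) both separates $\varphi(\delta_{w_0})$ from the images of the other maximal-length words in $G$ and guarantees that $\delta_{w_0}$ genuinely occurs in $\varphi(\delta_{w_0})$ with non-zero coefficient. The only point requiring a modicum of care is the observation that a finite subset of $S_\infty$ has a well-defined maximal length, so that $w_0$ exists; everything else is bookkeeping.
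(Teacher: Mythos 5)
Your argument is correct and is essentially identical to the paper's own proof: both take an element of maximal length in the support, pair with the corresponding functional $\delta_{w}'$, and use Lemma \ref{1}(ii) to collapse the sum and obtain a contradiction. No further comment is needed.
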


\begin{proof}
Assume towards a contradiction that $\sum_{u \in F} \alpha_u \delta_u \in \ker \varphi$ for some non-empty finite set $F \subset S_\infty$ and $\alpha_u \in \C \setminus \{0\} \ (u \in F)$. Take $w \in  F$ of maximal length. Then
\begin{align*}
0 = \left \langle \varphi \left(\sum_{u \in F} \alpha_u \delta_u \right), \delta_w' \right\rangle = \sum_{u \in F} \alpha_u \langle \varphi(\delta_u), \delta_w' \rangle 
= \alpha_w \langle \varphi(\delta_w), \delta_w' \rangle,
\end{align*}
where the final equality follows from Lemma \ref{1}(ii). That lemma also tells us that $\langle \varphi(\delta_w), \delta_w' \rangle \neq 0$, forcing $\alpha_w = 0$, a contradiction.
\end{proof}

We can now prove our main theorem.

\begin{proof}[Proof of Theorem \ref{-1}]
Recall that $(A, \Vert \cdot \Vert)$ denotes a C*-completion of $\C(BC*S_\infty)$, which exists by Lemma \ref{6}, and $A$ is infinite since $\delta_p, \delta_q \in A$. Let $A_0 \subset A$ be the image of $\varphi$. Corollary \ref{2} implies that $A_0 \cong \C S_\infty$, which is stably finite by Lemma \ref{0}.
Moreover, $\varphi(\delta_{t_n}) = a_n \rightarrow \delta_p$ as $n \rightarrow \infty$, so that $\delta_p \in \overline{A_0},$ and we see also that $\delta_{t_n} = \frac{1}{\gamma_n}(a_n - \delta_p) \in \overline{A_0} \ (n \in \N).$ The elements $\delta_p$ and $\delta_{t_n} \ (n \in \N)$ generate $A$ as a C*-algebra, and since $\overline{A_0}$ is a C*-subalgebra containing them, we must have $A= \overline{A_0}$, which completes the proof.
\end{proof}

\subsection*{Acknowledgements}
\noindent 
We are grateful to Yemon Choi for several helpful discussions whilst working on this problem. We are also grateful to the referee for his/her useful comments, in particular those concerning the references for Theorem \ref{7}.

\end{document}